\newcommand{\bea}{\begin{eqnarray}}
\newcommand{\eea}{\end{eqnarray}}
\newcommand{\be}{\begin {equation}}
\newcommand{\ee}{\end{equation}}
\newtheorem{theorem}{Theorem}[section]
\newtheorem{corollary}[theorem]{Corollary}
\newtheorem{lemma}[theorem]{Lemma}
\newtheorem{remark}[theorem]{Remark}
\newtheorem{claim}[theorem]{Claim}
\begin{document}

\title{prime divisors of sequences of integers}

\author {Xianzu Lin }

\date{ }
\maketitle
   {\small \it College of Mathematics and Computer Science, Fujian Normal University, }\\
    \   {\small \it Fuzhou, {\rm 350108}, China;}\\
      \              {\small \it Email: linxianzu@126.com}

\begin{abstract}

In this paper, we develop Furstenberg's topological proof of
infinity of primes, and prove several results about prime divisors
of sequences of integers, including the celebrated Schur's
theorem. In particular, we give a simple proof of a classical
result which says that a non-degenerate linear recurrence sequence
of integers of order $k>1$ has infinitely many prime divisors.
\end{abstract}



Keywords: Furstenberg's topology, prime divisor, recurrence
sequence, Schur's theorem


Mathematics Subject Classification 2010: 11A41, 11B37

\section{Introduction}
Given a sequence of integers $\{a_n\}_{n=0}^{\infty}$, a prime $p$
is called a $prime\ divisor$ of  $\{a_n\}_{n=0}^{\infty}$ if
$p| a_n$ for some $n$.  This paper is mainly concerned with the
question  that when a  sequence of integers
$\{a_n\}_{n=0}^{\infty}$ has infinitely many prime divisors.

Euclid's theorem of infinity of primes says that the  sequence of
natural numbers has infinitely many prime divisors.

Euclid's proof of infinity of primes is very beautiful and simple,
which can be further applied to prove Schur's theorem
\cite{gb,sc}:

 \begin{theorem} \label{i}
Let $f(x)\in \mathbb{Z}[x]$ be a nonconstant integral polynomial.
Then the sequence $\{f(n)\}_{n=0}^{\infty}$ has infinitely many
prime divisors.
\end{theorem}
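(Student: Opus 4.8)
The plan is to run a Euclidean-style argument by contradiction, exactly in the spirit of the reduction indicated above. Write $f(x)=a_d x^d+\cdots+a_1 x+a_0$ with $d\geq 1$ and $a_d\neq 0$. Since $f$ is nonconstant, $|f(n)|\to\infty$, so $\{f(n)\}$ certainly has at least one prime divisor; suppose, aiming at a contradiction, that it has only finitely many, say $p_1,\dots,p_r$, and set $P=p_1\cdots p_r$. I would first dispose of the degenerate case $a_0=f(0)=0$: then $f(x)=x\,h(x)$ for some $h\in\mathbb{Z}[x]$, so $p\mid f(p)$ for every prime $p$, and the sequence has every prime as a divisor. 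Hence I may assume $a_0\neq 0$.

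The heart of the argument is a change of variable that normalizes the constant term to a unit. Substituting $x=a_0 t$ and factoring gives $f(a_0 t)=a_0\,g(t)$, where $g(t)=1+a_1 t+a_2 a_0 t^2+\cdots+a_d a_0^{\,d-1}t^d\in\mathbb{Z}[t]$ satisfies $g(0)=1$ and $\deg g=d\geq 1$. The point of the normalization is that the constant term of $g$ is now $1$. I would then restrict $t$ to multiples of $P$: for $t=Ps$ every nonconstant term of $g(Ps)$ is divisible by $P$, so $g(Ps)\equiv g(0)=1\pmod{P}$, and therefore $g(Ps)$ is coprime to each $p_i$.

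Finally I would invoke nonconstancy of $g$: since $\deg g\geq 1$ we have $|g(Ps)|\to\infty$ as $|s|\to\infty$, so I can choose an integer $s$ with $|g(Ps)|>1$ and, adjusting the sign of $s$ to match that of $a_0$, with $n:=a_0 Ps\geq 0$. Then $g(Ps)$ has a prime factor $q$, and the coprimality $q\nmid p_i$ for all $i$ forces $q\notin\{p_1,\dots,p_r\}$; yet $q\mid a_0 g(Ps)=f(n)$ with $n$ a natural number, so $q$ is a prime divisor of $\{f(n)\}$ not in our list, which is the desired contradiction. The argument is short, and the only points requiring care are the two bookkeeping steps: producing the factorization $f(a_0 t)=a_0 g(t)$ with unit constant term (this is exactly what guarantees the new prime is genuinely new), and choosing the sign of $s$ so that the witness index $n=a_0 Ps$ is nonnegative, since the sequence is indexed by $\mathbb{N}$. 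I do not expect any serious obstacle beyond this; the mechanism is precisely Euclid's, with the substitution $x\mapsto a_0 t$ playing the role of the product $p_1\cdots p_r+1$.
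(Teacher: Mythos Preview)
Your argument is correct. The factorisation $f(a_0t)=a_0\,g(t)$ with $g(0)=1$, the congruence $g(Ps)\equiv 1\pmod P$, and the sign adjustment ensuring $n=a_0Ps\ge 0$ all go through cleanly; the edge case $f(0)=0$ is handled (in fact even more trivially, since then every prime divides $f(0)=0$).

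However, your route is genuinely different from the paper's. You give the classical Euclidean construction---essentially the direct argument that the Introduction alludes to when it says Euclid's proof ``can be further applied to prove Schur's theorem.'' The paper instead deduces Schur's theorem from its Theorem~\ref{main1}: it shows that any continuous map $\mathbb{N}\to\mathbb{Z}$ (in Furstenberg's topology) that is unbounded on every congruence class has infinitely many prime divisors, and then observes that polynomials are continuous because addition and multiplication are. Your approach is shorter and fully elementary for this single statement; the paper's approach is less explicit but isolates exactly the two properties (continuity in $\mathcal F$ and unboundedness on arithmetic progressions) that drive the conclusion, which is what allows it to be reused verbatim for the recurrence sequences treated in Theorems~\ref{w1} and~\ref{main5}.
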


In 1955, Furstenberg gave a mystery proof of Euclid's theorem,
using the language of topology as follows:

For $a,b\in \mathbb{Z}$, $b>0$, let $Con(a,b)$ be the congruence
class $\{x\in \mathbb{Z}\mid x\equiv a (mod\ b)\}$. Then we obtain
a topology $ \mathcal{F}$ (Furstenberg's topology) by taking the
classes $Con(a,b)$ as a basis for the open sets.
 We note that each
$Con(a,b)$ is closed as well. If the set of primes $\mathbb{P}$
were finite, then
$$\mathbb{Z}\setminus \{1,-1\}=\bigcup_{p\in\mathbb{P}}Con(0,p)$$
would be also closed. Consequently $\{1,-1\}$ would be an open set,
but this contradicts the definition of the topology $\mathcal{F}$.

In \cite{ca,me}, it was shown that the topological language can be
avoided in Furstenberg's proof. In \cite[p.56]{da}, Furstenberg's
proof is treated as simply a reductio version of Euclid's. But, as
we will see in the following, Furstenberg's non-constructive proof
has special advantage in many cases; even the topological language
turns out to be very convenient!

Throughout this paper, $\mathbb{Z}$ denotes the set of
integers endowed with the Furstenberg topology $\mathcal{F}$, and $ \mathbb{N}$ and $ \mathbb{N}_0$  denote
the sets of positive and non-negative integers considered as topological
subspaces of $\mathbb{Z}$, respectively.

This work is supported by National Natural Science Foundation for
young (no.11401098). The author thanks the anonymous referee for
numerous suggestions and corrections about this paper.

 \section{Schur's theorem and its generalization}

In this section, we give a topological proof of Schur's theorem,
and derive a generalization. We first extend Furstenberg's proof
in the following form:

 \begin{theorem} \label{main1}
Let $f:\mathbb{N}\rightarrow \mathbb{Z}$ be a continuous map,
which is unbounded on any congruence class. Then the sequence
$\{f(n)\}_{n=1}^{\infty}$ has infinitely many prime divisors.
\end{theorem}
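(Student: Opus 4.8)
The plan is to run Furstenberg's argument in the topological space $\mathbb{N}$ (the positive integers with the subspace topology from $\mathcal{F}$), but transported through the continuous map $f$. First I would recall the key topological facts that make the original proof work: in $\mathbb{Z}$ each basic congruence class $Con(a,b)$ is simultaneously open and closed (clopen), since its complement is a finite union of other congruence classes. The same holds for the traces of these sets on the subspace $\mathbb{N}$. So in $\mathbb{N}$ the arithmetic progressions are clopen.

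Next I would suppose for contradiction that $\{f(n)\}_{n=1}^\infty$ has only finitely many prime divisors, say $p_1,\dots,p_r$. For each prime $p$, consider the set $C_p = f^{-1}(Con(0,p)) = \{n\in\mathbb{N} : p \mid f(n)\}$. Because $Con(0,p)$ is closed in $\mathbb{Z}$ and $f$ is continuous, each $C_p$ is closed in $\mathbb{N}$; and since $Con(0,p)$ is also open, each $C_p$ is in fact clopen. The assumption that only $p_1,\dots,p_r$ occur as prime divisors means that every $n$ for which $f(n)\neq \pm 1$ lies in some $C_{p_i}$, i.e.
$$
\{n\in\mathbb{N} : f(n)\neq \pm 1\} = \bigcup_{i=1}^r C_{p_i}.
$$
This is a finite union of closed sets, hence closed; therefore its complement $S = \{n\in\mathbb{N} : f(n)=\pm 1\}$ is open in $\mathbb{N}$.

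The crux is then to derive a contradiction from the openness of $S$. The hypothesis that $f$ is unbounded on every congruence class is exactly what forbids $S$ from containing any nonempty basic open set: if $S$ contained some $Con(a,b)\cap \mathbb{N}$, then $f$ would take only the values $+1$ and $-1$ on that whole progression, making $f$ bounded there and contradicting the unboundedness hypothesis. Hence $S$ has empty interior, so $S$ open forces $S=\varnothing$. I would need to rule out the degenerate possibility $S=\mathbb{N}$ as well (which the same unboundedness argument handles, since $\mathbb{N}$ itself is a congruence class on which $f$ would then be bounded). The main obstacle, and the place demanding care, is precisely this last step: checking that ``$S$ open and proper'' genuinely contradicts the unboundedness assumption, and confirming that $S$ being empty is itself absurd.

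Finally I would close the contradiction by observing that $S=\varnothing$ cannot hold either without tension: if $f$ never equals $\pm 1$ yet has only finitely many prime divisors, the finite union $\bigcup_i C_{p_i}$ equals all of $\mathbb{N}$, so some single progression $C_{p_i}$ must have nonempty interior and contain a basic open set $Con(a,b)\cap\mathbb{N}$ — but then $p_i \mid f(n)$ for all $n$ in that progression while $f$ remains unbounded there, which is not yet a contradiction by itself and so I would instead extract the contradiction directly from the earlier conclusion that $S$ is open with empty interior, combined with the fact that $\{Con(a,b)\cap\mathbb{N}\}$ forms a basis of nonempty open sets. The elegance of the approach is that continuity of $f$ converts the number-theoretic divisibility condition into a purely topological clopen-ness statement, and the unboundedness hypothesis is the precise topological lever that prevents the ``units'' set from being open — mirroring exactly the role played by $\{1,-1\}$ in Furstenberg's original argument.
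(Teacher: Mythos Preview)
Your argument is along the right lines but has a genuine gap at exactly the spot you yourself flag as ``the place demanding care.'' You correctly show that $S=\{n:f(n)=\pm1\}$ is open in $\mathbb{N}$ and that, by unboundedness, $S$ cannot contain any arithmetic progression; hence $S=\varnothing$. But then you need a contradiction from $S=\varnothing$, and you do not obtain one. Your attempted closing move---that $\bigcup_i C_{p_i}=\mathbb{N}$ forces some $C_{p_i}$ to contain a full progression on which $p_i\mid f$---is true but, as you note, not a contradiction: divisibility by a fixed prime on a progression is perfectly compatible with unboundedness. The analogy with Furstenberg's original proof breaks down precisely because there one knows a priori that $\{1,-1\}$ is nonempty, whereas here nothing in your setup guarantees $S\neq\varnothing$.

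The paper supplies the missing idea with a short preliminary normalization. Pick any $m$ with $|f(m)|=k>0$; by continuity of $f$ there is a progression $V=\{m+nb:n\ge0\}$ with $f(V)\subset k\mathbb{Z}$. Set $g(n)=\tfrac1k f(m+nb)$. Then $g$ is again continuous and unbounded on every progression, and---crucially---$|g(0)|=1$, so the ``units set'' $M=\{n:g(n)=\pm1\}$ is \emph{nonempty}. Now your Furstenberg-style argument goes through cleanly for $g$: if $g$ had finitely many prime divisors, $M$ would be open and nonempty, hence would contain a progression, contradicting unboundedness. Since prime divisors of $g$ are prime divisors of $f$, this finishes the proof. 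The point is that one step of ``divide out a value of $f$ along a suitable progression'' manufactures the unit you were missing.
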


\begin{proof}
Choose an $m\in \mathbb{N}$ such that $|f(m)|=k>0$. Let $U$ denote
the open set $Con(0,k) = k\mathbb{Z}$; thus $U$ is an open
neighborhood of $k$. By the continuity of $f$, there is an open
neighborhood $V$ of $m$ (in $\mathbb{N}$) of the form $V:=
Con(m,b)_{|\mathbb{N}} = \{m+nb:n\in \mathbb{N}_0\}$ such that
$f(V )\subset U$, i.e., $f(m+nb)$ ($n=0,1,2\cdots$) is divisible
by $k$. Now, for each $n\in \mathbb{N}_0$, set
$g(n)=\tfrac{1}{k}f(m+nb)$. Then $g(n)$ is also a continuous map
 from $\mathbb{N}_0$ to $\mathbb{Z}$.  It suffices to show that
$\{g(n)\}_{n=0}^{\infty}$ has infinitely many prime divisors.
Assume that the set $\mathbb{P}_g$ of prime divisors of
$\{g(n)\}_{n=0}^{\infty}$  is finite. Then
$$M=\mathbb{N}_0\setminus \bigcup_{p\in\mathbb{P}_g}g^{-1}Con(0,p)$$ is an open subset of $\mathbb{N}_0$, consisting of those $n$ on which $g$ takes the value $\pm1$. But by the definition of $g$, we have $|g(0)|=1$. Hence $M$ is nonempty, and must contain a congruence class. This contradicts our assumption that $f$ is unbounded on any
congruence class.
\end{proof}

Since both addition and multiplication are continuous with respect
to Furstenberg's topology, any integral polynomial defines a
continuous map  from $\mathbb{N}$ to $\mathbb{Z}$. Hence Schur's
theorem follows directly from Theorem \ref{main1}.

Polynomial sequences belong to a very special kind of recurrence
sequences. In fact, an integral polynomial $f(x)$ of degree $k$
satisfies
\begin{equation}\label{for1}f(n+k+1)=\sum_{i=0}^k(-1)^i{k+1\choose
i+1}f(n+k-i).\end{equation} In the theorem below we generalize
Schur's theorem to a class of recurrence sequences.

 \begin{theorem} \label{w1}
Let $\{a_n\}_{n=0}^{\infty}$ be a recurrence sequence of integers
satisfying
$$a_{n+k+1}=\pm a_{n}+f(a_{n+1},\cdots a_{n+k})$$  where
$f\in\mathbb{Z}[x_{1},\cdots x_{k}]$. We further assume that $\lim
_{n\rightarrow\infty}|a_n|=\infty$. Then the recurrence
sequence has infinitely many prime divisors.
\end{theorem}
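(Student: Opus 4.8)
The plan is to realize the sequence as a continuous map into the Furstenberg space and then invoke Theorem \ref{main1}. Define $a:\mathbb{N}_0\rightarrow\mathbb{Z}$ by $n\mapsto a_n$ (a harmless index shift matches the $\mathbb{N}$ of Theorem \ref{main1}). The hypothesis $\lim_{n\to\infty}|a_n|=\infty$ makes the unboundedness condition automatic: every congruence class in the index set is infinite and contains indices tending to infinity, so $|a_n|$ is unbounded along it. Thus everything reduces to showing that $n\mapsto a_n$ is continuous with respect to $\mathcal{F}$; once this is established, Theorem \ref{main1} finishes the proof.

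To prove continuity it suffices to fix a modulus $b>0$ and show that the reduced sequence $(a_n \bmod b)_{n\ge 0}$ is purely periodic, for then $\{n:a_n\equiv a\ (\mathrm{mod}\ b)\}$ is a union of residue classes modulo the period and hence open, so the preimage of every basic set $Con(a,b)$ is open. I would track the state vector $v_n=(a_n,a_{n+1},\dots,a_{n+k})\bmod b\in(\mathbb{Z}/b\mathbb{Z})^{k+1}$. Since the recurrence expresses $a_{n+k+1}$ through $a_n,\dots,a_{n+k}$, it defines a transition map $T:v_n\mapsto v_{n+1}$ on this finite set.

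The decisive point—and the place where the precise shape of the recurrence is used—is that $T$ is a bijection. Because the coefficient of $a_n$ is $\pm1$, a unit modulo every $b$, I can solve the relation as $a_n=\pm\bigl(a_{n+k+1}-f(a_{n+1},\dots,a_{n+k})\bigr)$, recovering $a_n$, and hence all of $v_n$, from the components of $v_{n+1}$. Thus $T$ is invertible on the finite set $(\mathbb{Z}/b\mathbb{Z})^{k+1}$, so each of its orbits is purely periodic; in particular $(v_n)$, and therefore $(a_n\bmod b)$, is purely periodic. I expect this invertibility to be the only real obstacle: without the unit coefficient $\pm1$ one would obtain merely eventual periodicity, and then the preimages could fail to be open at finitely many points, breaking continuity. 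With $T$ a bijection the argument goes through for every $b$, yielding continuity of $n\mapsto a_n$ and, via Theorem \ref{main1}, infinitely many prime divisors.
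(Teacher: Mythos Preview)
Your proof is correct and follows exactly the paper's route: establish that $n\mapsto a_n$ is continuous by showing pure periodicity modulo every $b$, then invoke Theorem~\ref{main1}. The paper compresses your bijection-of-state-vectors argument into the phrase ``a routine argument about linear recurrence sequences \cite[p.45]{ev1} shows that $\{a_n\}$ is periodic modulo $m$,'' but your explicit use of the unit coefficient $\pm1$ to invert the shift map is precisely what that routine argument amounts to.
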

\begin{proof}
A routine argument about linear recurrence sequences
\cite[p.45]{ev1} shows that $\{a_n\}_{n=0}^{\infty}$ is periodic
modulo $m$, i.e., for any positive integer $m$, there exists
$s\in\mathbb{N}$ such that $a_n\equiv a_{n+s} (mod\ m)$ for all
$n\geq0$. Hence  a function $h:\mathbb{N}_0\rightarrow\mathbb{Z}$
of the form $h(n)=a_{n}$ is continuous. Now applying Theorem
\ref{main1} we conclude the proof.
\end{proof}

 Notice however that Theorem  \ref{w1} does not cover even the  linear recurrence
sequences satisfying the simple relation
$$a_{n+2}=a_{n+1}+2a_{n}.$$ In order to treat more recurrence
sequences, we need to further generalize Theorem \ref{main1}.

For a nonzero integer $m$, we define a variant of Furstenberg's
topology $ \mathcal{F}_m$ on $\mathbb{Z}$  by taking the classes
$Con(a,b)$ as a basis for the open sets, where $b$ runs over all
positive integers prime to $m$; then the symbol $ \mathbb{Z}_m$
denotes the
 topological space $(\mathbb{Z}, \mathcal{F}_m)$. The following result is a simple
generalization of Theorem \ref{main1}; the proof is exactly the
same.

 \begin{theorem} \label{main2}
Let $m\neq0$ and let $f:\mathbb{N}\rightarrow \mathbb{Z}_m$ be a
continuous map, which is unbounded on each congruence class. If
$f(n)$ is prime to $m$ for each  $n\in \mathbb{N}$, then
$\{f(n)\}_{n=0}^{\infty}$ has infinitely many prime divisors.
\end{theorem}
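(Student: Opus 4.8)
The plan is to run the argument of Theorem \ref{main1} essentially verbatim, the one new ingredient being to verify at each step that every congruence class and divisibility set appearing in the proof has modulus prime to $m$, so that it remains open (and closed) in the variant topology $\mathcal{F}_m$; the hypothesis that $f(n)$ is prime to $m$ for all $n$ is exactly what secures this.

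First I would choose $N\in\mathbb{N}$ with $|f(N)|=k>0$. Since $f(N)$ is prime to $m$ by hypothesis, $k$ is prime to $m$, so $U:=Con(0,k)=k\mathbb{Z}$ is a basic open set of $\mathcal{F}_m$ and an open neighborhood of $f(N)$. Continuity of $f:\mathbb{N}\to\mathbb{Z}_m$ then yields a basic open neighborhood $V=Con(N,b)_{|\mathbb{N}}$, with $\gcd(b,m)=1$, on which $f$ takes values in $U$; that is, $k\mid f(N+nb)$ for all $n\in\mathbb{N}_0$. Setting $g(n)=\tfrac{1}{k}f(N+nb)$ gives a continuous map $g:\mathbb{N}_0\to\mathbb{Z}_m$, and because both $f(N+nb)$ and $k$ are prime to $m$, every value $g(n)$ is prime to $m$ as well.

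Next I would record the key consequence of this coprimality: every prime divisor of the sequence $\{g(n)\}$ is prime to $m$, so for each $p\in\mathbb{P}_g$ the set $Con(0,p)=p\mathbb{Z}$ is both open and closed in $\mathcal{F}_m$ (its complement $\bigcup_{i=1}^{p-1}Con(i,p)$ is a union of basic open sets precisely because $\gcd(p,m)=1$). Assuming $\mathbb{P}_g$ finite, continuity of $g$ makes $\bigcup_{p\in\mathbb{P}_g}g^{-1}Con(0,p)$ a finite union of closed sets, hence closed, so its complement $M$ in $\mathbb{N}_0$ is open and consists exactly of those $n$ with $g(n)=\pm1$. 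Since $|g(0)|=|f(N)|/k=1$, the set $M$ is nonempty, hence contains a basic neighborhood $Con(a,b')_{|\mathbb{N}_0}$ with $\gcd(b',m)=1$.

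Finally I would unwind this back to $f$: on that progression $g\equiv\pm1$ is bounded, so $f$ is bounded on $\{N+nb:n\equiv a\ (\mathrm{mod}\ b')\}$, which is a single congruence class of modulus $bb'$, itself prime to $m$. This contradicts the assumption that $f$ is unbounded on each congruence class, completing the proof. The only place demanding care — and the reason the extra hypothesis is imposed — is the bookkeeping that every modulus produced, namely $k$, each $p\in\mathbb{P}_g$, and $bb'$, stays coprime to $m$; granting that, the topological skeleton of Theorem \ref{main1} transfers unchanged.
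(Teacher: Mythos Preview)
Your proposal is correct and follows exactly the approach the paper indicates (it simply says the proof is ``exactly the same'' as that of Theorem \ref{main1}). One small slip worth cleaning up: the domain $\mathbb{N}$ (and $\mathbb{N}_0$) carries the full Furstenberg topology $\mathcal{F}$, not $\mathcal{F}_m$, so the basic neighborhoods $Con(N,b)_{|\mathbb{N}}$ and $Con(a,b')_{|\mathbb{N}_0}$ you obtain by continuity need not have $\gcd(b,m)=1$ or $\gcd(b',m)=1$. This is harmless, since the unboundedness hypothesis applies to \emph{every} congruence class, not only those with modulus prime to $m$; you can simply delete those coprimality claims and the final contradiction goes through unchanged. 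The coprimality bookkeeping that actually matters --- for $k$ and for the primes $p\in\mathbb{P}_g$, so that $Con(0,k)$ and $Con(0,p)$ are open and closed in the codomain $\mathbb{Z}_m$ --- you have handled correctly.
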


 \section{linear recurrence sequences}

It is well-known that a non-degenerate linear recurrence sequence
of integers of order $k>1$ has infinitely many prime divisors
\cite{ev,la1,lv,po,va,wa1}. In this section, using Theorem
\ref{main2}, we give a simple proof of this result.

First we give some preliminaries about linear recurrence
sequences. We say that a sequence of integers
$\{a_n\}_{n=0}^{\infty}$ is a $linear\ recurrence\ sequence$ of
order $k$ if the following linear recurrence relation of order $k$
is satisfied
\begin{equation}\label{fur4ve}a_{n+k}=r_1a_{n+k-1}+\cdots+r_ka_{n} \  \  \   (n=0,1,2,\cdots),\end{equation} where $r_1,\cdots,r_k\in \mathbb{Z}$ are constant, and $r_k\neq0$.
 We note that a linear recurrence sequence may satisfy linear recurrence relations of different orders.
 For example, the Fibonacci sequence $$a_1=1, a_2=1, a_{3}=2, a_{4}=3,\cdots$$ satisfies both  $$a_{n+2}=a_{n+1}+a_{n},$$ and $$a_{n+3}=3a_{n+2}-a_{n+1}-2a_{n},$$ which are of order 2 and 3, respectively.
 When we say about an order of a linear recurrence sequence and recurrence relation, we always mean the minimal one.

 Let $g(x)=1-\sum_{i=1}^{k}r_ix^i$ be the characteristic polynomial of the recurrence relation (\ref{fur4ve}). If $\{a_n\}_{n=0}^{\infty}$ is a linear recurrence sequence of order $k$,
 satisfying  (\ref{fur4ve}) one can easily check that  $$f(x)=g(x)(\sum_{n=0}^{\infty}a_nx^n)$$ is an integral polynomial of degree less than $k$.
Hence, the generating function of $\{a_n\}_{n=0}^{\infty}$ is  a
rational function
 \begin{equation}\label{fur4}\sum_{n=0}^{\infty}a_nx^n=\frac{f(x)}{g(x)},\end{equation} and $f(x)$ and $g(x)$ are co-prime (otherwise, $\{a_n\}_{n=0}^{\infty}$ would be of order less than $k$, see below for details).
Thus, for a linear recurrence sequence of order $k$, the
generating function extends to a meromorphic function on
$\mathbb{C}$ with $k$ poles (the poles are counted with their
multiplicities). On the other hand, if the generating function of
a sequence of integers $\{a_n\}_{n=0}^{\infty}$  is of the form
(\ref{fur4}), where $g(x)=1-\sum_{i=1}^{k}r_ix^i\in
\mathbb{Z}[x]$, $r_k\neq0$, and $f(x)\in \mathbb{Z}[x]$ is a
nonzero polynomial of degree less than $k$, and $f(x)$ and $g(x)$
are co-prime, then $\{a_n\}_{n=0}^{\infty}$ is a linear recurrence
sequence of order $k$, satisfying (\ref{fur4}).

A recurrence sequence is called $degenerate$ if its characteristic
 polynomial has two distinct roots whose ratio is a root of unity, and $non$-$degenerate$ otherwise.

For $$g(x)=1-\sum_{i=1}^{k}r_ix^i=\prod_{i=1}^{k}(1-\psi_ix)\in
\mathbb{Z}[x],$$ and $b\in \mathbb{N}$, set $$(\phi_b
g)(x)=\prod_{i=1}^{k}(1-\psi_i^b x)\in \mathbb{Z}[x].$$ If the
ratio of two distinct roots of $g(x)$ is not a root of unity, then
the same is true for $\phi_b g$.

\begin{lemma}\label{lemma}
Let $\{a_n\}_{n=0}^{\infty}$ be a non-degenerate linear recurrence
sequence of order $k>1$, and let $$g(x)=1-\sum_{i=1}^{k}r_ix^i$$
be the associated characteristic polynomial. Then for $0\leq c<b$, the  subsequence
$\{a_{c+bn}\}_{n=0}^{\infty}$ is also a non-degenerate linear
recurrence sequence of order $k$, whose characteristic polynomial
is $\phi_b g$.
  \end{lemma}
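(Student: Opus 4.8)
The plan is to pass from the recurrence to the closed-form (exponential-polynomial) description of the sequence and then extract the arithmetic subsequence term by term. Write $g(x)=\prod_{j=1}^{s}(1-\psi_j x)^{d_j}$, where $\psi_1,\dots,\psi_s$ are the \emph{distinct} values among the reciprocal roots and $d_1+\cdots+d_s=k$. Note first that $r_k\neq0$ forces the leading coefficient of $g$ to be nonzero, so $\prod_i\psi_i\neq0$ and every $\psi_j$ is nonzero. Since $\{a_n\}$ has order exactly $k=\deg g$, its minimal characteristic polynomial is $g$ itself, and the standard theory of linear recurrence sequences gives a representation $a_n=\sum_{j=1}^{s}P_j(n)\psi_j^{\,n}$ in which each $P_j$ is a \emph{nonzero} polynomial of degree exactly $d_j-1$. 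Indeed, the minimal polynomial of such an exponential polynomial is $\prod_{j:\,P_j\neq0}(1-\psi_j x)^{\deg P_j+1}$, so its equality with $g=\prod_j(1-\psi_j x)^{d_j}$ is precisely the assertion that no $P_j$ vanishes and each has the maximal allowed degree.

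Next I would substitute $n\mapsto c+bn$, obtaining $a_{c+bn}=\sum_{j=1}^{s}\left(P_j(c+bn)\,\psi_j^{\,c}\right)(\psi_j^{\,b})^{n}$. Setting $Q_j(n):=P_j(c+bn)\,\psi_j^{\,c}$, each $Q_j$ is again a polynomial in $n$ of the same degree $d_j-1$ as $P_j$: since $b\geq1$ and $\psi_j^{\,c}\neq0$, passing to $P_j(c+bn)\psi_j^{\,c}$ only rescales the leading coefficient by the nonzero factor $b^{\,d_j-1}\psi_j^{\,c}$. In particular every $Q_j$ is nonzero, so $\{a_{c+bn}\}$ is again an exponential polynomial, now in the bases $\psi_1^{\,b},\dots,\psi_s^{\,b}$.

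The crux is to verify that these bases remain \emph{pairwise distinct}, for this is exactly what keeps the order from dropping. If $\psi_i^{\,b}=\psi_j^{\,b}$ for some $i\neq j$, then $(\psi_i/\psi_j)^b=1$, so the ratio $\psi_i/\psi_j$ of the two distinct roots $1/\psi_i,1/\psi_j$ of $g$ would be a root of unity, contradicting non-degeneracy. Hence the $\psi_j^{\,b}$ are distinct, and the representation $a_{c+bn}=\sum_j Q_j(n)(\psi_j^{\,b})^{n}$, with distinct nonzero bases and nonzero polynomial coefficients $Q_j$ of degrees $d_j-1$, shows by the same order-counting principle that $\{a_{c+bn}\}$ is a linear recurrence sequence of order $\sum_j d_j=k$ whose minimal characteristic polynomial is $\prod_j(1-\psi_j^{\,b}x)^{d_j}=\prod_{i=1}^{k}(1-\psi_i^{\,b}x)=(\phi_b g)(x)$.

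Finally, non-degeneracy of the subsequence follows from the observation recorded just before the lemma: since no ratio of distinct roots of $g$ is a root of unity, the same holds for $\phi_b g$. I expect the only genuine obstacle to be the distinctness step; the degree and nonvanishing claims for the $Q_j$ are routine bookkeeping, but the preservation of distinct bases — and with it the exactness of the order $k$ rather than a possibly smaller order — is precisely the point at which the non-degeneracy hypothesis is indispensable.
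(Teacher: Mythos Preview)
Your argument is correct and complete, but it proceeds by a genuinely different route from the paper. The paper works with generating functions: starting from $\sum a_n x^n=f(x)/g(x)$ with $f,g$ coprime, it applies a roots-of-unity filter to extract $\sum a_{c+bn}x^n=h(x)/(\phi_b g)(x)$, observes that the right-hand side is a rational function vanishing at infinity (forcing $\deg h<k$), notes that $h\in\mathbb{Z}[x]$ because the left side lies in $\mathbb{Z}[[x]]$, and finally counts poles on both sides to conclude that $h$ and $\phi_b g$ are coprime. You instead pass through the exponential-polynomial closed form $a_n=\sum_j P_j(n)\psi_j^{\,n}$, substitute $n\mapsto c+bn$, and track the degrees of the coefficient polynomials and the distinctness of the new bases $\psi_j^{\,b}$ directly. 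Your approach makes the role of non-degeneracy especially transparent---it is visibly the single hypothesis preventing the bases from colliding and the order from dropping---whereas in the paper's version non-degeneracy is hidden inside the pole count (it is what guarantees that the poles of the individual summands in the filter do not merge or cancel). The generating-function method, on the other hand, delivers integrality of the numerator $h$ for free and stays closer to the rational-function framework already set up in the paper. Both arguments are standard and of comparable length; yours is slightly more elementary in that it avoids any analytic language about poles.
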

\begin{proof}
Let the generating function of $\{a_n\}_{n=0}^{\infty}$ be
of the form (\ref{fur4}), with the polynomial $f(x)$ prime to $g(x)$. Then
 the generating function of $\{a_{c+bn}\}_{n=0}^{\infty}$ is
 \begin{equation}\label{fur41}\frac{1}{b}\sum_{i=1}^{b}\zeta_b^{-ci}x^{-c/b}\frac{f(\zeta_b^i x^{1/b})}{g(\zeta_b^i x^{1/b})}=\frac{h(x)}{\phi_b g(x)},\end{equation}
 where $\zeta_b=e^{2\pi i/b}$ and $h(x)\in \mathbb{C}[x]$ is of degree less than $k$ (because the left hand side of (\ref{fur41}) is a rational function vanishing at infinity).
 As the left hand side of (\ref{fur41}) lies in $\mathbb{Z}[[x]]$,  $h(x)$ is in fact an integral polynomial. By counting the poles of both sides of (\ref{fur41}),
 we see that  $\phi_b g(x)$ and $h(x)$ are co-prime; hence the lemma follows.
\end{proof}

For completeness of the paper, we give a short proof of a weaker version of
the celebrated Skolem-Mahler-Lech theorem \cite{va1} (asserting that for every
sequence $\{a_n\}$ as in Corollary \ref{T361} below, $|a_n|\rightarrow\infty$ as $n\rightarrow\infty$).
\begin{corollary} \label{T361}
A non-degenerate linear recurrence sequence
$\{a_n\}_{n=0}^{\infty}$ of order $k>1$ is unbounded on any
congruence class.
\end{corollary}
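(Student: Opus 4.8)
The plan is to reduce the statement to the unboundedness of a single sequence and then to invoke Kronecker's theorem on algebraic integers lying on the unit circle. A congruence class in $\mathbb{N}_0$ is a set $\{c+bn:n\ge 0\}$ with $0\le c<b$, and the values the sequence takes there form the subsequence $\{a_{c+bn}\}_{n=0}^{\infty}$. By Lemma \ref{lemma} this subsequence is again a non-degenerate linear recurrence sequence of order $k>1$ (with characteristic polynomial $\phi_b g$). Hence it suffices to prove the self-contained claim that \emph{every} non-degenerate linear recurrence sequence of order $k>1$ is unbounded, after which the corollary follows by applying the claim to each $\{a_{c+bn}\}_{n=0}^{\infty}$.

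To prove the claim I would argue by contradiction, assuming $|a_n|\le C$ for all $n$, and write $g(x)=\prod_{i=1}^k(1-\psi_i x)$. Boundedness forces the generating function $\sum_n a_n x^n=f(x)/g(x)$ to converge on $|x|<1$, so it has no pole there; since $f$ and $g$ are co-prime, every root $1/\psi_i$ of $g$ satisfies $|1/\psi_i|\ge 1$, that is $|\psi_i|\le 1$ for all $i$. On the other hand, the $\psi_i$ are exactly the roots of the monic integral polynomial $x^kg(1/x)=x^k-r_1x^{k-1}-\cdots-r_k$, hence algebraic integers whose product is the nonzero integer $\pm r_k$; thus $\prod_i|\psi_i|=|r_k|\ge 1$. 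Combined with $|\psi_i|\le 1$ this pins down $|\psi_i|=1$ for every $i$.

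The key step is then Kronecker's theorem: a nonzero algebraic integer all of whose conjugates have absolute value at most $1$ is a root of unity. Each $\psi_i$ is such an integer, for its conjugates are roots of its minimal polynomial, a divisor of $x^kg(1/x)$, and so lie among the $\psi_j$ on the unit circle. Therefore every $\psi_i$ is a root of unity. If $g$ has two distinct roots, their ratio is a root of unity and the sequence is degenerate, contradicting the hypothesis. If instead $g$ has a single root $\psi$ of multiplicity $k$, then $\psi$ is a rational root of unity, so $\psi=\pm1$, and $a_n=P(n)(\pm1)^n$ with $\deg P=k-1\ge 1$; such a sequence is unbounded, again a contradiction.

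The main obstacle is not any single computation but ensuring the final case analysis is exhaustive: one must rule out both the genuinely multi-root situation (handled by degeneracy) and the single repeated-root situation (handled by polynomial growth), since repeated characteristic roots could otherwise slip past an argument phrased only in terms of distinct roots. Separating these two cases after the Kronecker step is exactly what makes the short proof go through.
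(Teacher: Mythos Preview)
Your proof is correct, but it follows a quite different path from the paper's. After the common first step (reduce via Lemma~\ref{lemma} to showing that the sequence itself is unbounded), the paper argues purely elementarily: if $|a_n|<m$ for all $n$, then eventual periodicity modulo $2m$ produces an arithmetic progression along which the sequence is eventually constant, and one more appeal to Lemma~\ref{lemma} shows that such a subsequence would still be a non-degenerate linear recurrence of order $k>1$, which is impossible since an eventually constant sequence has generating function with at most one pole. Your argument instead passes through the characteristic roots: boundedness forces $|\psi_i|\le 1$, integrality of $\prod\psi_i$ forces $|\psi_i|=1$, and Kronecker's theorem then makes every $\psi_i$ a root of unity, after which non-degeneracy and the repeated-root case finish the job. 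The paper's route is more self-contained (no Kronecker, no analytic considerations) and stays within the periodicity theme it has already set up; your route is the more ``classical'' one via the explicit structure of the roots and has the virtue of explaining \emph{why} boundedness fails in terms of the spectrum of the recurrence. Both are short, and your case split between distinct roots and a single repeated root is exactly what is needed to make the Kronecker approach airtight.
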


\begin{proof}
By lemma \ref{lemma}, it suffices to show that
$\{a_n\}_{n=0}^{\infty}$ itself is unbounded. Assume that there
exists a positive integer $m$ such that $|a_n|<m$ for all $n$. As
$\{a_n\}_{n=0}^{\infty}$ is eventually periodic modulo $2m$, there
exists a positive integer$ k$ such that the numbers $a_{nk}$ are
congruent to each other modulo $2m$, for $n$ sufficiently large.
Then $|a_n|<m$ forces $\{a_{nk}\}_{n=0}^{\infty}$ to be eventually
constant; this contradicts Lemma \ref{lemma}.
\end{proof}

A positive integer $m$ will be said to be a $null\ divisor$ of a
sequence of integers $\{a_n\}_{n=0}^{\infty}$ if $a_n$ is
divisible by $m$ for $n$ sufficiently large. For a prime  $p$, the
largest integer $j\in \mathbb{N}$ such that $p^j$ is a null divisor of
$\{a_n\}_{n=0}^{\infty}$ will be called the index of $p$ in
$\{a_n\}_{n=0}^{\infty}$. We need the following result from
\cite{wa} about null divisors.
\begin{lemma}\label{lemma1}
Let $\{a_n\}_{n=0}^{\infty}$ be a non-degenerate linear recurrence
sequence of order $k>1$, and let $$g(x)=1-\sum_{i=1}^{k}r_ix^i$$
be the associated characteristic polynomial. Assume that
$GCD(r_1,\cdots, r_k)=1$. Then for any prime $p$, the index of  $p$
in $\{a_n\}_{n=0}^{\infty}$ is finite.
  \end{lemma}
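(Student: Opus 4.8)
The plan is to reformulate the statement $p$-adically and then exploit the factorisation of the companion matrix over $\mathbb{Z}_p$ dictated by the $p$-adic sizes of the roots $\psi_i$ of the recurrence. First I would note that, writing $v_p$ for the $p$-adic valuation, the index of $p$ is infinite exactly when $p^j$ is a null divisor for every $j$, i.e. when $v_p(a_n)\to\infty$, i.e. when $a_n\to 0$ in the ring $\mathbb{Z}_p$ of $p$-adic integers. So I assume this for contradiction. I form the state vectors $v_n=(a_n,\dots,a_{n+k-1})\in\mathbb{Z}_p^{\,k}$ and the companion matrix $A\in M_k(\mathbb{Z}_p)$ of the relation (\ref{fur4ve}), so that $v_n=A^nv_0$. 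Here $v_0\neq 0$, since $v_0=0$ would force the whole sequence to vanish, contradicting order $k>1$; and the hypothesis gives $v_n\to 0$.

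The heart of the argument is to split $\mathbb{Z}_p^{\,k}$ according to the valuations of the eigenvalues of $A$, which are precisely the reciprocal roots $\psi_i$ with $g(x)=\prod_i(1-\psi_i x)$. The characteristic polynomial $c(X)=X^k-r_1X^{k-1}-\cdots-r_k$ reduces modulo $p$ to $\bar c(X)=X^{k-d}\bar q(X)$, where $d=\max\{i:p\nmid r_i\}$ and $\bar q(0)\neq 0$. Crucially $d\geq 1$, because $GCD(r_1,\dots,r_k)=1$ forbids all of $r_1,\dots,r_k$ from being divisible by $p$; this is the only place the coprimality hypothesis enters. Since $X^{k-d}$ and $\bar q$ are coprime in $\mathbb{F}_p[X]$, Hensel's lemma lifts the factorisation to $c=s\,q$ in $\mathbb{Z}_p[X]$ with $\bar s=X^{k-d}$ and $\bar q=\bar q$, and the Chinese Remainder Theorem yields an $A$-invariant decomposition $\mathbb{Z}_p^{\,k}=L_{+}\oplus L_{u}$ into lattices, where $L_{u}=\ker q(A)$ has rank $d\geq 1$. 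On $L_{u}$ the matrix $A$ is invertible modulo $p$ (as $\bar q(0)\neq 0$), hence lies in $GL_d(\mathbb{Z}_p)$ and acts as an isometry; on $L_{+}\otimes\mathbb{Q}_p$ every eigenvalue of $A$ has positive valuation, so $A$ is topologically nilpotent there.

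Writing $v_0=v_0^{+}+v_0^{u}$ accordingly, the nilpotent part gives $A^nv_0^{+}\to 0$, whence $A^nv_0^{u}=v_n-A^nv_0^{+}\to 0$; but $A$ is an isometry on $L_{u}$, so $v_0^{u}=0$. This is where I extract the contradiction. The condition $v_0^{u}=0$ says that the $d\geq 1$ reciprocal roots $\psi_i$ that are $p$-adic units make no contribution to $\{a_n\}$, so the degree-$d$ factor of $g$ corresponding to them must divide the numerator $f$ in (\ref{fur4}); this contradicts that $f$ and $g$ are co-prime. Equivalently, $s(A)v_0=0$ exhibits a recurrence for $\{a_n\}$ of order $\deg s=k-d<k$, contradicting the minimality of the order $k$.

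The main obstacle I anticipate is the lattice step: I must verify that the Hensel factorisation is compatible with the standard lattice $\mathbb{Z}_p^{\,k}$, so that the projection $v_0^{u}$ is genuinely integral and $A|_{L_{u}}$ is an honest lattice automorphism — this is exactly what legitimises the isometry conclusion $v_0^{u}=0$. A secondary point to check is that the order-$(k-d)$ relation produced over $\mathbb{Q}_p$ descends to $\mathbb{Q}$; this is automatic, since the minimal recurrence order of $\{a_n\}$ equals the rank of its Hankel matrix and is therefore independent of the ground field. I note finally that the proof uses only $GCD(r_1,\dots,r_k)=1$ and $r_k\neq 0$, so non-degeneracy is inherited from the ambient setting but is not actually needed here.
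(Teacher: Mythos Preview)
The paper does not actually prove this lemma: it is quoted from Ward \cite{wa} without argument. So there is no ``paper's own proof'' to match, and your task is simply to give a correct one. Your $p$-adic companion-matrix argument does that, and does it cleanly.

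A few remarks on the points you flagged. The lattice step is fine: since $\bar s=X^{k-d}$ and $\bar q$ are coprime in $\mathbb{F}_p[X]$, their resultant is a $p$-adic unit, so $s$ and $q$ are comaximal in $\mathbb{Z}_p[X]$ and $\mathbb{Z}_p[X]/(c)\cong \mathbb{Z}_p[X]/(s)\times \mathbb{Z}_p[X]/(q)$ as $\mathbb{Z}_p$-modules; identifying $\mathbb{Z}_p^{\,k}$ with $\mathbb{Z}_p[X]/(c)$ via the cyclic vector for the companion matrix gives an honest direct-sum decomposition $L_+\oplus L_u$ of the standard lattice. On $L_u$ the restriction $A|_{L_u}$ has determinant $\pm q(0)\in\mathbb{Z}_p^\times$, hence is a $\mathbb{Z}_p$-automorphism of $L_u$, and since $L_u$ is a direct summand of $\mathbb{Z}_p^{\,k}$ one has $(L_u\otimes\mathbb{Q}_p)\cap\mathbb{Z}_p^{\,k}=L_u$; thus $A|_{L_u}$ preserves the restricted sup-norm and the isometry conclusion $v_0^{u}=0$ is legitimate. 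Your Hankel-rank remark is the right way to descend the shorter recurrence from $\mathbb{Q}_p$ to $\mathbb{Q}$, and it also covers the boundary case $d=k$ (where $L_+=0$ and you get $v_0=0$ directly). You are also right that non-degeneracy plays no role; only $\gcd(r_1,\dots,r_k)=1$ and the minimality of the order $k$ are used.

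Compared with the classical route (Ward works with periodicity of $\{a_n\}$ modulo prime powers and analyses when the reduced sequence can be eventually zero), your approach trades elementary congruence bookkeeping for a single structural step—the Hensel splitting of the companion module into a unit-root part and a topologically nilpotent part. The payoff is a very short contradiction once the splitting is in place.
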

Now we are in the position to give a  simple proof of the
following theorem \cite{la1}; cf.\cite{ev,lv,po,va,wa1}.
 \begin{theorem} \label{main5}
Let $\{a_n\}_{n=0}^{\infty}$ be a non-degenerate linear recurrence
sequence of order $k>1$, satisfying
$$a_{n+k}=r_1a_{n+k-1}+\cdots+r_ka_{n}, $$
where $r_k\neq0$. Then $\{a_n\}_{n=0}^{\infty}$ has infinitely
many prime divisors.
\end{theorem}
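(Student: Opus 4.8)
The plan is to deduce the theorem from Theorem \ref{main2} by manufacturing, from $\{a_n\}$, a \emph{new} non-degenerate linear recurrence sequence $\{\hat a_n\}$ of order $k$ that satisfies all three hypotheses of Theorem \ref{main2} and whose prime divisors are all prime divisors of $\{a_n\}$; then the infinitude of prime divisors of $\{\hat a_n\}$ forces the same for $\{a_n\}$, with no contradiction argument needed. Before choosing the modulus, I would first reduce to the case $\gcd(r_1,\dots,r_k)=1$, which is exactly the hypothesis under which Lemma \ref{lemma1} guarantees finite indices; this reduction is indispensable, since a prime dividing every $r_i$ divides all $a_n$ (for $n\ge k$) to unbounded order and so has \emph{infinite} index, and such a prime can never be cleared by dividing out a constant.

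To carry out this reduction, suppose a prime $p$ divides every $r_i$. Writing $g(x)=\prod_{i=1}^{k}(1-\psi_i x)$, this means $g\equiv 1\pmod p$, i.e. every root $\psi_i$ is divisible by $p$ in the ring of integers of the splitting field. Choosing $b$ and $s$ with $s/b$ equal to the smallest $p$-adic valuation of the $\psi_i$, the numbers $\psi_i^{b}/p^{s}$ are again algebraic integers, not all divisible by $p$, with distinct values and no ratio a root of unity, so they are the roots of a non-degenerate recurrence of order $k$ whose coefficients are no longer all divisible by $p$. This recurrence is realised, up to a bounded denominator, by the sequence $a_{bn}/p^{sn}$, an integer non-degenerate recurrence of order $k$ whose prime divisors lie among those of $\{a_n\}$. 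Iterating over the finitely many primes of $\gcd(r_i)$ clears all common factors, so we may assume $\gcd(r_i)=1$. This property is inherited by every subsequence: if a prime $q$ divided the gcd of the coefficients of $\phi_b g$, then all $\psi_i^{b}$, hence all $\psi_i$, would be divisible by $q$, forcing $q\mid\gcd(r_i)=1$; thus Lemma \ref{lemma1} applies to every subsequence formed below.

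Now set $m=\mathrm{rad}(r_k)$, the product of the distinct primes dividing $r_k$ (note that $m$ is unchanged under passing to subsequences, since the leading coefficient of $\phi_b g$ is only a power of $r_k$). For $b$ coprime to $m$ we have $\gcd(b,r_k)=1$, so the companion matrix is invertible modulo $b$ and the sequence is purely periodic modulo $b$; hence each set $\{n:a_n\equiv a\pmod b\}$ is a union of congruence classes, and $n\mapsto a_n$ is continuous into $\mathbb{Z}_m$. Unboundedness on every congruence class is Corollary \ref{T361}. The remaining hypothesis, that the terms be prime to $m$, I would secure by treating the primes $p\mid m$ one at a time: keeping a subsequence that is already prime to the primes handled so far, I let $j$ be the (finite, by Lemma \ref{lemma1}) index of the next prime $p$; since $p^{j+1}$ is not a null divisor while $p^{j}$ is, the residues modulo the period on which $v_p=j$ form a nonempty union of congruence classes, so I pass to a sub-progression on which $v_p(a_n)=j$ identically and divide by $p^{j}$. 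The quotient is prime to $p$, automatically nonzero, and (as $p^{j}$ is a unit modulo the earlier primes) still prime to them; it remains a non-degenerate order-$k$ recurrence with coprime coefficients by Lemma \ref{lemma} and the last paragraph. After finitely many steps $\{\hat a_n\}$ is prime to $m$ at every term, and Theorem \ref{main2} yields infinitely many prime divisors of $\{\hat a_n\}$, hence of $\{a_n\}$.

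The hard part is the reduction to $\gcd(r_i)=1$: the infinite-index primes really do require passing to $a_{bn}/p^{sn}$, and the delicate point is verifying that this is an integer recurrence of the same order $k$, i.e. controlling the $p$-denominators coming from the Vandermonde of the roots (which is what the ``bounded denominator'' above absorbs). The second, milder, difficulty is the simultaneity of coprimality to the several primes of $m$; this is precisely what the one-prime-at-a-time descent resolves, once $\gcd(r_i)=1$ is in force so that Lemma \ref{lemma1} is available at every stage.
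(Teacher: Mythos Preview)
Your overall architecture is exactly the paper's: (i) reduce to $\gcd(r_1,\dots,r_k)=1$; (ii) observe this survives passage to arithmetic subsequences; (iii) use Lemma~\ref{lemma1} and periodicity to strip, one prime of $r_k$ at a time, a fixed power $p^r$ so that the resulting terms are units at $p$; (iv) apply Theorem~\ref{main2} with $m=r_k$. Steps (ii)--(iv) match the paper essentially verbatim, including the observation that the constant term of $\phi_b g$ is $\pm r_k^{\,b}$ so the relevant prime set is unchanged.

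The only substantive difference is in step (i). The paper handles all bad primes in a single stroke: since the ideal $(\psi_1,\dots,\psi_k)$ of the ring of integers $A$ is Galois-invariant, Dedekind theory gives integers $s,t$ with $(\psi_1,\dots,\psi_k)^s=(t)$; then $t^i\mid m_i$ for the coefficients of $\phi_s g$, the sequence $b_n=a_{sn}/t^n$ is integral, and Claim~\ref{cdrdeem11} shows $\gcd(m_1/t,\dots,m_k/t^k)=1$. Your prime-by-prime version is morally the same idea, but as written it is looser: the phrase ``the smallest $p$-adic valuation of the $\psi_i$'' must be read as the minimum of $v_{\mathfrak p}(\psi_i)/e_{\mathfrak p}$ over all $i$ and all primes $\mathfrak p\mid p$ of $A$ (otherwise it is not well defined), and the integrality of $a_{bn}/p^{sn}$---which you correctly flag as the delicate point---is precisely what the identity $(\psi_1,\dots,\psi_k)^s=(t)$ makes transparent (it gives $t^i\mid m_i$, whence $t^n\mid a_{sn}$ by the recurrence for $\{a_{sn}\}$, at least after a bounded shift). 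The paper's formulation also yields $\gcd=1$ in one blow via Claim~\ref{cdrdeem11}, whereas your iteration needs the extra (true, but unstated) remark that clearing one prime cannot introduce a new prime into the gcd, since $q\mid\gcd$ of the new coefficients forces every $\psi_i$ into every prime over $q$ and hence $q\mid\gcd(r_i)$.

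In short: same proof, with the paper's Galois-ideal packaging of step (i) being the clean way to discharge the ``bounded denominator'' issue you identified.
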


\begin{proof}
Let $$g(x)=1-\sum_{i=1}^{k}r_ix^i=\prod_{i=1}^{k}(1-\psi_ix)\in
\mathbb{Z}[x]$$ be the associated characteristic polynomial. Let $
\mathbb{Q}(\psi_1,\cdots, \psi_k)$ be the splitting field of
$g(x)$ and let $A$ be the integral closure of $\mathbb{Z}$ in $
\mathbb{Q}(\psi_1,\cdots, \psi_k)$. For $\alpha_1,\cdots,
\alpha_n\in A$, let $(\alpha_1,\cdots, \alpha_n)$ denote the ideal
of $A$ generated by $\alpha_1,\cdots, \alpha_n$. The ideal
$(\psi_1,\cdots, \psi_k)$ of $A$ is invariant under the Galois
group. Hence by the ideal theory of  Dedekind rings (cf.
\cite[Theorem 2, p.18]{la} and  \cite[Corollary 2, p.26]{la}),
there exist two positive integers $s,t$ such that
\begin{equation}\label{fur6}(\psi_1^s,\cdots,
\psi_k^s)=(\psi_1,\cdots, \psi_k)^s=(t).\end{equation} Set
\begin{equation}\label{fur61}h(x)=1-\sum_{i=1}^{k}m_ix^i=\prod_{i=1}^{k}(1-\psi_i^sx)\in
\mathbb{Z}[x].\end{equation} We see that $m_i\in \mathbb{Z}$
belongs to $(\psi_1^s,\cdots, \psi_k^s)^{i}=(t^i)$, hence $m_i$ is
divisible by $t^i$.

 \begin{claim}\label{cdrdeem11}
$$GCD(m_1/t^1,\cdots, m_k/t^k)=1.$$
\end{claim}

\begin{proof}
First, by (\ref{fur6}) we have $\psi_i^s/t\in A$ for $1\leq i\leq
k$, and $(\psi_1^s/t,\cdots, \psi_k^s/t) =A$.

Now we show by contradiction  that $(m_1/t^1,\cdots, m_k/t^k)= A$
and this would imply immediately the claim. Assume the contrary
that $(m_1/t^1,\cdots, m_k/t^k)\neq A$. Then there exists a prime
ideal $B$ of $A$ such that $(m_1/t^1,\cdots, m_k/t^k)\in B$. As
$(\psi_1^s/t,\cdots, \psi_k^s/t) =A$, there exists a partition
$I\cup J=\{1,2,\cdots,k\}$ such that $\psi_i^s/t\in B$ if and only
if $i\in I$, and $J$ is nonempty. Set $j=|J|$, and let $f_j(x_{1},
\cdots,x_{k})$ be the $j$-th elementary symmetric polynomial. Then
each product in $f_j(\psi_1^s/t,\cdots, \psi_k^s/t)$ belongs to
$B$ except for $\prod_{i\in J}\psi_i^s/t$, hence
$f_j(\psi_1^s/t,\cdots, \psi_k^s/t)\notin B$. On the other hand,
by (\ref{fur61}), $f_j(\psi_1^s/t,\cdots,
\psi_k^s/t)=(-1)^{j-1}m_j/t^j\in B$.
\end{proof}

By Lemma \ref{lemma}, $\{a_{sn}\}_{n=0}^{\infty}$ is still a
non-degenerate linear recurrence sequence of order $k$, with the
characteristic polynomial
$$h(x/t)=1-\sum_{i=1}^{k}m_ix^i.$$  By induction, we see that $a_{sn}$ is divisible by
$t^n$, for each $n\geq 0$.

Set $b_n=a_{sn}/t^n.$ Then $\{b_{n}\}_{n=0}^{\infty}$ is a
non-degenerate linear recurrence sequence of order $k$, with the
characteristic polynomial
$$h'(x/t)=1-\sum_{i=1}^{k}m_ix^i/t^i.$$ As the prime divisors of $\{b_{n}\}_{n=0}^{\infty}$ are also prime divisors of $\{a_n\}_{n=0}^{\infty}$, it suffices to prove the theorem by  replacing $\{a_n\}_{n=0}^{\infty}$ with
$\{b_n\}_{n=0}^{\infty}$. Hence by Claim \ref{cdrdeem11}, we can assume  that
$GCD(r_1,\cdots, r_k)=1$. This condition still holds for subsequence
$\{a_{ni+l}\}_{n=0}^{\infty}$ where $i>0$.

Let $p_1,\cdots,p_m$ be all the prime divisors of $r_k$. By Lemma
\ref{lemma1}, we can choose a positive integer $l$ which is larger
than  the index of $p_{1}$ in $\{a_n\}_{n=0}^{\infty}$. Then there
exists a positive integer $j$ such that $a_n\equiv a_{n+j} (mod \
p_{1}^l)$ for $n$ sufficiently large. As $p_{1}^l$ is not a null
divisor of $\{a_n\}_{n=0}^{\infty}$, we can choose a subsequence
$\{a_{jn+t}\}_{n=0}^{\infty}$ such that the terms $a_{jn+t}$ are
not divisible by $p_{1}^l $, and are  congruent to each other
modulo $p_{1}^l $. Let $p_{1}^r $ be the highest power of $p_{1} $
dividing $a_{t}$. We can replace $\{a_n\}_{n=0}^{\infty}$ by
$\{a_{jn+t}/p_{1}^r\}_{n=0}^{\infty}$ whose terms are prime to $p_{1}
$.

Continuing in this way, we can finally find a subsequence
$\{a_{cn+d}\}_{n=0}^{\infty}$, $(c>0)$ and a positive integer $e$
such that each term $a_{cn+d}$ is divisible by $e$ and the
quotient $a_{cn+d}/e$ is prime to $p_1,\cdots,p_m$. By Corollary
\ref{T361}, the sequence $\{a_{cn+d}/e\}_{n=0}^{\infty}$ gives a
continuous map from $\mathbb{N}_0$ to $\mathbb{Z}_{r_k}$,
satisfying all the conditions of Theorem \ref{main2}. Now applying
Theorem \ref{main2} we conclude the proof.
\end{proof}

\begin{remark}\label{remariks}
We note that, for each polynomial $f(x)\in \mathbb{Z}[x]$ of
degree $k$, the sequence $\{f(n)\}_{n=0}^{\infty}$ is a
non-degenerate linear recurrence sequence of order $k+1$(see (\ref{for1})). Hence
Theorem \ref{main5} is another generalization of Schur's theorem.
\end{remark}

\end{document}